\documentclass[11pt]{amsart}
\usepackage{amsmath,amssymb,amsthm}
\usepackage[margin=1.15in]{geometry}

\theoremstyle{plain}
\newtheorem{theorem}{Theorem}[section]
\newtheorem{proposition}[theorem]{Proposition}
\newtheorem{lemma}[theorem]{Lemma}
\newtheorem{corollary}[theorem]{Corollary}
\newtheorem{result}[theorem]{Result}
\theoremstyle{definition}
\newtheorem{definition}[theorem]{Definition}
\newtheorem{example}[theorem]{Example}
\theoremstyle{remark}

\newcommand{\Z}{\mathbb{Z}}

\newcommand{\C}{\mathbb{C}}
\newcommand{\F}{\mathbb{F}}
\newcommand{\Ghat}{\widehat{G}}
\newcommand{\Khat}{\widehat{K}}
\newcommand{\Hhat}{\widehat{H}}
\newcommand{\ord}{\operatorname{ord}}
\newcommand{\Tr}{\operatorname{Tr}}

\newcommand{\Aut}{\operatorname{Aut}}
\newcommand{\lcm}{\operatorname{lcm}}
\newcommand{\BH}{\mathrm{BH}}

\begin{document}

\title{Butson Hadamard Matrices from Pairs of Gauss Sums}
\date{19 September 2026}

\author{Cheng Yiin Ong}
\address{School of Physical and Mathematical Sciences, Nanyang Technological
University, Singapore 637371, Republic of Singapore}
\email{ongc0106@e.ntu.edu.sg}

\author{Bernhard Schmidt}
\address{School of Physical and Mathematical Sciences, Nanyang Technological
University, Singapore 637371, Republic of Singapore}
\email{bernhard@ntu.edu.sg}

\keywords{Butson Hadamard matrix, Gauss sum, Jacobi sum, self-conjugacy,
skew Hadamard difference set, generalized bent function}

\begin{abstract}
Let $q$ be a prime power.
We give a construction of Butson Hadamard matrices $\BH((\F_q\times \F_q,+),\lcm(6,d))$
for any divisor $d>1$ of $q-1$.
Most character values of the group ring elements corresponding to these matrices involve
products of two Gauss sums over $\F_q$ (with non-quadratic Gauss sums occurring) and this property distinguishes our result
from all previously known constructions of group invariant Butson Hadamard matrices.
In fact, our $\BH((\F_q\times \F_q,+),\lcm(6,d))$ matrices arise as a special case of a more general construction based on 
what we call ``Butson-Jacobsthal functions'' that resembles the construction
of Hadamard matrices from Jacobsthal matrices. 
\end{abstract}

\maketitle

%%%%%%%%%%%%%%%%%%%%%%%%%%%%%%%%%%%%%%%%%%%%%%%%%%%%%%%%%%

\section{Introduction}

Write $\zeta_h=e^{2\pi i/h}$ and let $\mu_h$ be the group of $h$th roots of unity.
An $n \times n$ matrix $H$ with entries in $\mu_h$ is a \emph{Butson Hadamard matrix}
if it satisfies $HH^* = nI$, where $H^{*}$ denotes the conjugate transpose of $H$ and $I$ is the identity matrix.
We call $H$ a \emph{$\BH(n,h)$ matrix}.
Let $G$ be a multiplicatively written group of order $n$.
Using the elements of $G$ as row and column indices,
we say that a matrix $A = (a_{g,k})_{g,k \in G}$ is \emph{$G$-invariant}
if $a_{gl, kl} = a_{g, k}$ for all $g, k, l \in G$.
A $G$-invariant $\BH(n, h)$ matrix is called a \emph{$\BH(G, h)$ matrix}.

Recently, group invariant Butson Hadamard matrices have been studied intensively;
see \cite{Sch19} for a survey.
The so-called generalized bent functions from $G$ to $\Z_h$ are special types of $\BH(G,h)$ matrices
\cite[Prop.~2.3]{Sch19}, and $\BH(G,h)$ matrices are also connected to relative difference sets in a suitable extension of $G$
\cite[Prop.~2.7]{Sch19}.
Nonexistence
results for generalized bent functions, and thus for $\BH(G,h)$ matrices, were
obtained by Leung and Wang \cite{LW20} and by Leung, Li, and Mao \cite{LLM23}.
The methods used in their work rest on results on vanishing sums of roots of
unity \cite{CJ76, LL00} and on the structure of subsets without unique differences in finite
abelian groups \cite{LS22}.
Hiranandani and Schlenker \cite{HS16} classified the
$\BH(\Z_p,p)$ matrices for primes $p$. Leung, Chue, and Zhao \cite{LCZ} removed
the restriction on the order of the roots of unity and proved that all
$\BH(\Z_p,h)$ matrices are monomially equivalent to Fourier matrices.

For groups of square order, which is the case studied in this paper, the situation is different,
and it is one of our aims to show that $\BH(G,h)$ matrices exist in abundance in this case.
The simplest relevant case is $G=\Z_p\times \Z_p$ where $p$ is a prime.
Under a semiprimitivity assumption, 
 all $\BH(\Z_p\times \Z_p, h)$ matrices can be determined explicitly and arise from a spread construction \cite{DS26}.
Our main result, in particular, implies that, in the absence of semiprimitivity,
there are many further $\BH(\Z_p\times \Z_p, h)$ matrices with character values of a new type.
Moreover, our construction extends to numerous further families of abelian groups of square order.

Notably, our construction
shares several features with the classical construction of Hadamard
matrices from Jacobsthal matrices due to Paley \cite{Pal33}
(and this is our reason for using the term ``Butson-Jacobsthal function'').
Let $q\equiv3\pmod4$ be a prime power and let $\eta$ be the quadratic character
of $\F_q^{\times}$, extended by $\eta(0)=0$. The \emph{Jacobsthal matrix}
\[
  Q=\bigl(\eta(x-y)\bigr)_{x,y\in\F_q}
\]
has vanishing diagonal, entries $\pm1$ off the diagonal, and satisfies
\begin{equation}\label{eq:jacprop}
  QQ^{T}=qI-J,\qquad QJ=JQ=0,
\end{equation}
where $I$ and $J$ are the identity and the all-one matrix of size $q$.
Paley's Hadamard matrix of order $q+1$ is obtained from $Q$ by bordering:
\begin{equation}\label{eq:paley}
  \begin{pmatrix}
      1 & \mathbf 1^{T}\\[2pt]
      -\mathbf 1 & I+Q
    \end{pmatrix}.
\end{equation}

In our construction, the role of the quadratic character $\eta$ is played by 
what we call \emph{Butson-Jacobsthal functions}.
If $f$ is such a function on a group $K$ of order $q$, then the matrix
\begin{equation}\label{eq:bjmatrix}
  Q_f=\bigl(f(xy^{-1})\bigr)_{x,y\in K}
\end{equation}
has vanishing diagonal, entries in $\mu_h$ off the diagonal, and satisfies
\begin{equation}\label{eq:bjprop}
  Q_f^{\phantom{*}}Q_f^{*}=qI-J,\qquad Q_fJ=JQ_f=0 ,
\end{equation}
that is, \eqref{eq:jacprop} with the transpose replaced by the conjugate
transpose.
While the Paley construction borders a single copy of $Q$ by a row and a column of $\pm1$,
our construction takes the Kronecker product of two matrices $Q_{f_1}, Q_{f_2}$
obtained from Butson-Jacobsthal functions $f_1,f_2$,
and borders it in both directions at once.
In matrix form, Theorem \ref{thm:main} shows that
\begin{equation}\label{eq:matrixform}
 \zeta_6\,(J\otimes I)
   +\zeta_6^{5}\,(I\otimes J)
   +\gamma\,(Q_{f_1}\otimes Q_{f_2})
\end{equation}
is a $\BH(K\times H,h)$ matrix whenever $6\mid h$, $\gamma\in\mu_h$, and
$f_1,f_2$ are Butson-Jacobsthal functions on $K,H$, respectively.

%%%%%%%%%%%%%%%%%%%%%%%%%%%%%%%%%%%%%%%%%%%%%%%%%%%%%%%%

\section{Preliminaries}
For a finite abelian group $G$, let $\widehat{G}$ denote its group of complex characters.
The following is a standard result \cite[Ch.~VI, Lem.~3.5]{be}. 
\begin{result}[Inversion Formula] \label{fourier}
Let $G$ be a finite abelian group and $X=\sum_{g\in G} a_gg$ with $a_g\in \C$. Then
$$
a_g = \frac{1}{|G|} \sum_{\chi\in \widehat{G}} \chi(Xg^{-1}) \text{ for all } g\in G.
$$
\end{result}

Throughout, we use the group ring formulation of \cite{Sch19} and pass between
matrices and group ring elements without further comment; see also \cite{HU06}. 
For a ring $R$, the elements of $R[G]$ have the form $D=\sum_{g\in G}a_gg$, $a_g\in R$,
and the $a_g$'s are the \emph{coefficients} of $D$.
For $R=\Z[\zeta_h]$, define
\[
  \Phi(D)=\bigl(a_{gk^{-1}}\bigr)_{g,k\in G}.
\]
Then $\Phi$ is an  $R$-algebra isomorphism  from $R[G]$ onto the algebra of
$G$-invariant matrices with entries in $R$. Its inverse sends a $G$-invariant
matrix $A$ to $\sum_{g\in G}a_{g,1}\,g$. Moreover $\Phi(D^{(-1)})=\Phi(D)^{*}$,
where $D^{(-1)}=\sum_{g\in G}\overline{a_g}\,g^{-1}$.
Consequently, $\Phi(D)$ is a
$\BH(G,h)$ matrix if and only if all $a_g$ are $h$th roots of unity and
$DD^{(-1)}=|G|$ in $R[G]$.
In this case, we say that $D$ corresponds to a $\BH(G,h)$ matrix.

\begin{lemma}[\cite{Sch19}, Lemma 2.1]\label{lem:crit}
Let $a_g\in\mu_h$ for all $g\in G$ and put $D=\sum_{g\in G}a_g g$.
Then $D$ corresponds to a $\BH(G,h)$ matrix if and only if
$DD^{(-1)}=|G|$, which holds if and only if
\begin{equation}\label{eq:crit}
|\chi(D)|^{2}=|G| \quad\text{for all }\chi\in\Ghat.
\end{equation}
\end{lemma}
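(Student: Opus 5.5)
The plan is to pass through the group ring, where both equivalences become statements about $DD^{(-1)}$ and its character values. For the first equivalence I use the isomorphism $\Phi$. Since $\Phi$ is an $R$-algebra isomorphism with $\Phi(D^{(-1)})=\Phi(D)^{*}$, we have
\[
\Phi(D)\Phi(D)^{*}=\Phi\bigl(DD^{(-1)}\bigr).
\]
Moreover, $\Phi$ sends the scalar $|G|\cdot 1_G$ to $|G|I$. Because $\Phi$ is injective, $\Phi(D)\Phi(D)^{*}=|G|I$ holds exactly when $DD^{(-1)}=|G|$. The entries of $\Phi(D)$ are the $a_g\in\mu_h$, so $\Phi(D)$ is a $\BH(G,h)$ matrix precisely under this condition. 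This is just the observation made before the lemma, restated.

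For the second equivalence I apply characters, here taking $G$ abelian as in Result~\ref{fourier}. For any $\chi\in\Ghat$ we have $\chi(D^{(-1)})=\sum_g \overline{a_g}\,\chi(g^{-1})=\overline{\chi(D)}$, since $\chi(g)$ is a root of unity. Hence
\[
\chi\bigl(DD^{(-1)}\bigr)=|\chi(D)|^2 .
\]
If $DD^{(-1)}=|G|$, then applying $\chi$ gives $|\chi(D)|^2=|G|$ for every $\chi$, so one direction is immediate.

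For the converse, write $E=DD^{(-1)}-|G|\cdot 1_G$. The hypothesis \eqref{eq:crit} says $\chi(E)=0$ for all $\chi\in\Ghat$. Result~\ref{fourier} then shows that every coefficient $e_g=\frac{1}{|G|}\sum_\chi \chi(Eg^{-1})=\frac{1}{|G|}\sum_\chi\chi(E)\chi(g^{-1})$ vanishes. Therefore $E=0$, i.e.\ $DD^{(-1)}=|G|$.

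Nothing here is a real obstacle. The only points needing care are that $\chi(D^{(-1)})=\overline{\chi(D)}$, and that the Inversion Formula requires $G$ to be abelian; the latter is the standing setting of the paper.
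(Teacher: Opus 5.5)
Your proof is correct and takes the same route the paper relies on. The paper cites this lemma from \cite{Sch19} without proof, but your first equivalence is exactly the remark about the isomorphism $\Phi$ that precedes the lemma. Your character criterion, via $\chi(D^{(-1)})=\overline{\chi(D)}$ and the Inversion Formula (Result~\ref{fourier}), is the same argument the paper gives in the proof of Lemma~\ref{lem:gr}.
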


We define two $\BH(G,h)$ matrices with corresponding group ring elements $D,E$
as \emph{equivalent} if
\begin{equation}\label{eq:equiv}
E= \pm \zeta_h^i\,g\,\alpha(D)
\end{equation}
for some integer $i$, $g\in G$, and $\alpha\in \Aut(G)$.
Note that we do \emph{not} use monomial equivalence of matrices here
(since monomial transformations do not preserve group invariance).

%%%%%%%%%%%%%%%%%%%%%%%%%%%%%%%%%%%%%%%%%%%%%%%%%%%%%%

\section{The Construction}

Let $q$ be a prime power and let  $\varphi$ be a nontrivial multiplicative character of $\F_{q}^{\times}$, extended by $\varphi(0)=0$. 
Then $\sum_{x\in\F_{q}}\varphi(x)=0$ and 
$$
\biggl|\sum_{x\in\F_{q}}\varphi(x)\alpha(x)\biggr|^{2}=q
$$
for every nontrivial character $\alpha$ of $(\F_q,+)$, 
a well-known property of Gauss sums \cite[Thm.~5.11]{LN97}.
Our construction works with the following generalization of multiplicative characters of finite fields.

\begin{definition}\label{def:gl}
Let $K$ be a finite abelian group. We call a function
$f:K\to\mu_h\cup\{0\}$ a \emph{Butson-Jacobsthal function} if $f(1)=0$,
\[
\sum_{a\in K}f(a)=0,
\qquad\text{and}\qquad
\Bigl|\sum_{a\in K}f(a)\chi(a)\Bigr|^{2}=|K|
\]
for every nontrivial character $\chi$ of $K$.
\end{definition}

\begin{theorem}\label{thm:main}
Let $G=K\times H$, where $K$ and $H$ are finite abelian groups with $|K|=|H|$,
and let $h$ be a positive integer divisible by $6$. Let $f_1:K \to \mu_h\cup\{0\}$ and $f_2: H\to \mu_h\cup\{0\}$ be
Butson-Jacobsthal functions and let $\gamma\in\mu_h$. 
Then any $E$ equivalent to  
\begin{equation}\label{eq:main}
D=1+\zeta_6\sum_{x\neq1}(x,1)+\zeta_6^{5}\sum_{y\neq1}(1,y)
+\gamma\sum_{x\neq1,\,y\neq1}f_1(x)f_2(y)\,(x,y)
\end{equation}
corresponds to a $\BH(G,h)$ matrix.
\end{theorem}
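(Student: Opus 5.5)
First reduce to $E=D$ itself. The operations in \eqref{eq:equiv} preserve the property of corresponding to a $\BH(G,h)$ matrix: multiplying by $\pm\zeta_h^i$ keeps all coefficients in $\mu_h$ (since $2\mid h$) and multiplies every character value by a unimodular constant; translation by $g$ only permutes coefficients and multiplies $\chi(D)$ by $\chi(g)$; and an automorphism $\alpha$ permutes coefficients and permutes $\Ghat$. So by Lemma~\ref{lem:crit} it suffices to treat $D$.

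Next check the coefficients of $D$. The coefficient of $(1,1)$ is $1$. The coefficients $\zeta_6,\zeta_6^5$ lie in $\mu_h$ because $6\mid h$. For $x\ne1$, $y\ne1$ the coefficient is $\gamma f_1(x)f_2(y)$, which lies in $\mu_h$ provided $f_1(x)$ and $f_2(y)$ are nonzero. So I need $f_1$ to vanish only at $1$. Put $n=|K|$ and let $S=\sum_{a\in K}f_1(a)$. Parseval together with the definition gives
\[
\sum_{\chi}\Bigl|\sum_a f_1(a)\chi(a)\Bigr|^2=|S|^2+(n-1)n=n(n-1),
\]
while the left side equals $n\sum_a|f_1(a)|^2=n\cdot\#\{a:f_1(a)\neq0\}$. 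Hence $f_1$ has exactly $n-1$ nonzero values, and since $f_1(1)=0$ it is nonzero off $1$. The same holds for $f_2$. Consequently $\sum_{x\ne1}f_1(x)(x,1)$ is the full sum, and we may write $F_1=\sum_{x\in K}f_1(x)x$ and $F_2=\sum_{y\in H}f_2(y)y$.

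Now compute $\chi(D)$ for $\chi=(\chi_1,\chi_2)$ with $\chi_1\in\Khat$ and $\chi_2\in\Hhat$. Since $f_1(1)=f_2(1)=0$, the last sum in \eqref{eq:main} is $\gamma F_1F_2$ viewed in $\Z[\zeta_h][G]$. Note $\chi_1(K\setminus\{1\})$ equals $n-1$ if $\chi_1$ is trivial and $-1$ otherwise. Write $\omega=\zeta_6$, and consider four cases.
\begin{itemize}
\item Both $\chi_1,\chi_2$ trivial: $\chi(D)=1+(\omega+\omega^5)(n-1)+0=n$, using $\omega+\bar\omega=1$ and $\sum f_i=0$.
\item $\chi_1$ trivial, $\chi_2$ nontrivial: $\chi(D)=1+\omega(n-1)-\omega^5+0$. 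Since $1-\omega^5=1-\bar\omega=\omega$, this equals $\omega n$, of modulus $n$.
\item $\chi_1$ nontrivial, $\chi_2$ trivial: symmetrically, $\chi(D)=1-\omega+\omega^5(n-1)=\omega^5 n$.
\item Both nontrivial: $\chi(D)=1-\omega-\omega^5+\gamma\chi_1(F_1)\chi_2(F_2)=\gamma\chi_1(F_1)\chi_2(F_2)$, because $1-\omega-\bar\omega=0$. Its modulus squared is $n\cdot n=|G|$.
\end{itemize}
Thus $|\chi(D)|^2=n^2=|G|$ for every $\chi$, and Lemma~\ref{lem:crit} finishes the proof.

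The only step that is not purely routine is the observation that a Butson–Jacobsthal function has no zeros outside $1$. Without it, the coefficients of $D$ might fail to lie in $\mu_h$. I would handle this by the Parseval count above; the rest is the identity $1-\zeta_6-\zeta_6^5=0$.
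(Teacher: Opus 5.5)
Your proof is correct and follows the paper's argument: reduce via Lemma~\ref{lem:crit} to checking $|\chi(D)|^2=|G|$, then split into four cases according to which factors of $\chi$ are trivial, using $1-\zeta_6-\zeta_6^5=0$. Two of your steps spell out what the paper's proof leaves implicit: the explicit reduction from $E$ to $D$, and the Parseval count showing that each $f_i$ vanishes only at $1$ (the paper gets the latter from the identity-coefficient remark after Lemma~\ref{lem:gr}).
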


\begin{proof}
Write $v=|K|$.
Note that all coefficients of $D$ are $h$th roots of unity. 
Thus, by Lemma \ref{lem:crit}, it suffices to show $|\chi(D)|^{2}=v^{2}$ for all $\chi\in\Ghat$. 
Every $\chi\in\Ghat$ has the form $\chi(x,y)=\alpha(x)\beta(y)$ with $\alpha\in\Khat$ and $\beta\in\Hhat$. 
Write
\[
S(\alpha)=\sum_{x\in K}f_1(x)\alpha(x),\qquad
T(\beta)=\sum_{y\in H}f_2(y)\beta(y) .
\]
As  $f_1:K \to \mu_h\cup\{0\}$ and $f_2: H\to \mu_h\cup\{0\}$ are Butson-Jacobsthal functions,
we have the following: $S(\alpha)=0$ if $\alpha$ is trivial and $|S(\alpha)|^2=v$ if $\alpha$ is nontrivial;
similarly, $T(\beta)=0$ if $\beta$ is trivial and $|T(\beta)|^2=v$ if $\beta$ is nontrivial.
Since $f_1(1)=f_2(1)=0$, we have
\begin{equation}\label{eq:four}
\chi(D)=1+\zeta_6\sum_{x\neq1}\alpha(x)+\zeta_6^{5}\sum_{y\neq1}\beta(y)
+\gamma S(\alpha)T(\beta).
\end{equation}
Recall that $\sum_{x\neq1}\alpha(x)$ equals $v-1$ for trivial $\alpha$ and
$-1$ otherwise. We use $\zeta_6+\zeta_6^{5}=1$ throughout.
Assume first that $\alpha$ and $\beta$ are trivial. Then
$S(\alpha)=T(\beta)=0$ and \eqref{eq:four} gives $\chi(D)=1+(v-1)(\zeta_6+\zeta_6^{5})=v$.
Assume next that $\alpha$ is nontrivial and $\beta$ is trivial. Then
$T(\beta)=0$ and \eqref{eq:four} gives
\[
\chi(D)=1-\zeta_6+(v-1)\zeta_6^{5}
=\bigl(1-\zeta_6-\zeta_6^{5}\bigr)+v\zeta_6^{5}=v\zeta_6^{5}.
\]
In the same way, $\chi(D)=v\zeta_6$ if $\alpha$ is trivial and $\beta$ is
nontrivial.
Finally, assume that both $\alpha$ and $\beta$ are nontrivial. 
Then the first
three terms of \eqref{eq:four} contribute $1-\zeta_6-\zeta_6^{5}=0$, and thus
\begin{equation}\label{eq:val}
\chi(D)=\gamma\,S(\alpha)T(\beta).
\end{equation}
Hence $|\chi(D)|^{2}=|S(\alpha)|^{2}|T(\beta)|^{2}=v^{2}$.
In all four cases we have $|\chi(D)|^{2}=v^{2}$. 
\end{proof}

%%%%%%%%%%%%%%%%%%%%%%%%%%%%%%%%%%%%%%%%%%%%%%%%%%%%%
%%%%%%%%%%%%%%%%%%%%%%%%%%%%%%%%%%%%%%%%%%%%%%%%%%%%%%%

\section{Butson-Jacobsthal functions}\label{sec:gl}

We first rephrase the definition of Butson-Jacobsthal functions in terms of group rings.
Note that we write $K$ for the group ring element $\sum_{x\in K}x$.

\begin{lemma}\label{lem:gr}
Let $K$ be a finite abelian group, $f:K\to\mu_h\cup\{0\}$, and $X=\sum_{x\in K}f(x)x$. 
Then $f$ is a Butson-Jacobsthal function if and only if $f(1)=0$ and
\begin{equation}\label{eq:gr}
XX^{(-1)}=|K|-K .
\end{equation}
\end{lemma}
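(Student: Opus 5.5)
We will compare the two sides of \eqref{eq:gr} character by character. The group ring $\C[K]$ is semisimple, so two of its elements are equal exactly when every character of $K$ takes the same value on them; this is the content of the Inversion Formula (Result \ref{fourier}). For every $\chi\in\Khat$ we have
\[
\chi\bigl(X^{(-1)}\bigr)=\sum_{x\in K}\overline{f(x)}\,\chi(x^{-1})=\overline{\chi(X)},
\]
because $\chi(x^{-1})=\overline{\chi(x)}$. Consequently
\[
\chi\bigl(XX^{(-1)}\bigr)=|\chi(X)|^{2},\qquad \chi(X)=\sum_{a\in K}f(a)\chi(a).
\]

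Next we compute the characters of the right-hand side. Since $\chi(K)$ equals $|K|$ for the trivial character $\chi_0$ and $0$ otherwise, we get
\[
\chi\bigl(|K|-K\bigr)=\begin{cases} 0, & \chi=\chi_0,\\ |K|, & \chi\neq\chi_0.\end{cases}
\]
Hence, by the Inversion Formula, \eqref{eq:gr} is equivalent to the conjunction of two conditions:
\[
|\chi_0(X)|^{2}=0 \qquad\text{and}\qquad |\chi(X)|^{2}=|K| \text{ for all nontrivial } \chi\in\Khat.
\]

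It remains to match these with Definition \ref{def:gl}. The first condition says $\sum_{a\in K}f(a)=0$, since $\chi_0(X)$ is a complex number and its squared modulus vanishes exactly when it does. The second condition is verbatim the Gauss-sum condition of the definition. Adding the common hypothesis $f(1)=0$ on both sides gives the equivalence in both directions.

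No step here is a real obstacle. The only point needing care is that equality in $\C[K]$ is detected by characters, and this follows from the Inversion Formula because each coefficient is determined by the character values.
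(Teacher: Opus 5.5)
Your proposal is correct and follows essentially the same route as the paper: compute $\chi\bigl(XX^{(-1)}\bigr)=|\chi(X)|^{2}$ and $\chi\bigl(|K|-K\bigr)$ for each $\chi\in\Khat$, then use the Inversion Formula to match them with the conditions of Definition \ref{def:gl}. Your remark that $\chi\bigl(X^{(-1)}\bigr)=\overline{\chi(X)}$ follows directly from the definition of $X^{(-1)}$ is accurate.
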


\begin{proof}
As all nonzero values of $f$ are roots of unity, we have
$\chi(X^{(-1)})=\overline{\chi(X)}$ and thus
$\chi(XX^{(-1)})=|\chi(X)|^{2}$ for all $\chi\in\Khat$. Furthermore,
$\chi(|K|-K)=0$ if $\chi$ is trivial and $\chi(|K|-K)=|K|$ if $\chi$ is nontrivial.    
Now the equivalence of  \eqref{eq:gr} and
 the two conditions in Definition \ref{def:gl} follows from Result \ref{fourier}.
\end{proof}

Note that \eqref{eq:gr} forces $|\{x:f(x)\neq0\}|=|K|-1$ by comparison of the
coefficients of the identity. Thus a Butson-Jacobsthal function vanishes at $1$ and
nowhere else.

\begin{example}\label{ex:char}
Let $K=(\F_{q},+)$ with $q=p^{n}$ and let $\varphi$ be a nontrivial character
of $\F_{q}^{\times}$, extended by $\varphi(0)=0$. Recall the Gauss sum
$G(\varphi)=\sum_{x\in\F_{q}^{\times}}\varphi(x)\zeta_p^{\Tr(x)}$, where $\Tr$
is the absolute trace of $\F_q$. We have $\sum_{x\in\F_q}\varphi(x)=0$ by
orthogonality, and
$\sum_{x\in\F_{q}}\varphi(x)\zeta_p^{\Tr(ux)}
=\overline{\varphi}(u)G(\varphi)$ for $u\neq0$. Since
$|G(\varphi)|^{2}=q$  \cite[Thm.~5.11]{LN97}, 
we conclude that $\varphi$ is a Butson-Jacobsthal function with values in $\mu_{\ord(\varphi)}\cup\{0\}$.
\end{example}

The next result provides Butson-Jacobsthal functions with values in $\{0,\pm1\}$. 
Recall that a subset $D$ of an abelian group $K$ of order $v$
 is a \emph{skew Hadamard difference set} if $|D|=\frac{v-1}{2}$, $DD^{(-1)}=\frac{v+1}{4}+\frac{v-3}{4}K$, and
$D\cup D^{(-1)}=K\setminus\{1\}$ is a disjoint union, and that $D$ is a
\emph{partial difference set with Paley parameters} if $|D|=\frac{v-1}{2}$, $D=D^{(-1)}$, and
$DD^{(-1)}=\frac{v-1}{4}(1+K)-D$.

\begin{proposition}\label{prop:pm1}
Let $K$ be a finite abelian group.
If $D$ is  a skew Hadamard difference set or a partial difference set with Paley parameters in $K$,
then $X=K-1-2D$ corresponds to a Butson-Jacobsthal function $f:K\to\{0,\pm 1\}$ via Lemma
\ref{lem:gr}.
\end{proposition}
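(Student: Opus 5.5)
We reduce to Lemma \ref{lem:gr}: it suffices to show that $X=K-1-2D$ has coefficients in $\{0,\pm1\}$, that the coefficient of $1$ is $0$, and that $XX^{(-1)}=|K|-K$ in $\Z[K]$. Write $v=|K|$. In both cases $1\notin D$. For a skew Hadamard difference set this holds because $D\cup D^{(-1)}=K\setminus\{1\}$. For a Paley partial difference set we compare coefficients of $1$ in $DD^{(-1)}=\frac{v-1}{4}(1+K)-D$: the left side has coefficient $|D|=\frac{v-1}{2}$ and the right side has $\frac{v-1}{2}-[1\in D]$. Hence the coefficient of $X$ at $1$ is $0$, and its coefficient at every $x\neq1$ is $+1$ or $-1$ according as $x\notin D$ or $x\in D$.

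Next we compute $XX^{(-1)}$ using $KD=|D|K=\frac{v-1}{2}K$ and $K^2=vK$. Putting $Y=K-1$, we have
\[
XX^{(-1)}=(Y-2D)(Y-2D^{(-1)})=Y^2-2Y(D+D^{(-1)})+4DD^{(-1)},
\]
where $Y^2=(v-2)K+1$ and $Y(D+D^{(-1)})=(v-1)K-(D+D^{(-1)})$. Thus
\[
XX^{(-1)}=1-vK+2(D+D^{(-1)})+4DD^{(-1)}.
\]

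In the skew case, $D+D^{(-1)}=K-1$ and $4DD^{(-1)}=(v+1)+(v-3)K$. Substituting gives $1-vK+2K-2+v+1+(v-3)K=v-K$. In the Paley case, $D=D^{(-1)}$, so $2(D+D^{(-1)})=4D$ and $4DD^{(-1)}=(v-1)(1+K)-4D$. Substituting gives $1-vK+(v-1)+(v-1)K=v-K$. In both cases $XX^{(-1)}=|K|-K$, so Lemma \ref{lem:gr} shows that $f$ is a Butson-Jacobsthal function.

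The argument is routine group ring bookkeeping. The only delicate step is checking that $1\notin D$ in the Paley case, since this is not part of the stated definition and is needed to get $f(1)=0$.
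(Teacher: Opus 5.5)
Your proof is correct and takes the same route as the paper: you check that $X$ has coefficients in $\{0,\pm1\}$ with coefficient $0$ at $1$, then derive $XX^{(-1)}=|K|-K$ from the defining equations of $D$ and apply Lemma \ref{lem:gr}. You carry out in full the group ring computation the paper calls straightforward, and you also justify $1\notin D$ in the Paley case by comparing coefficients of $1$, which the paper asserts without comment.
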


\begin{proof}
We first note that the coefficients of $X$ are all in $\{0,\pm 1\}$ and the coefficient of $1$
in $X$ is $0$, since $D$ has coefficients $0,1$ and $1\not\in D$.
Thus, by Lemma \ref{lem:gr}, $X$ corresponds  to a Butson-Jacobsthal function $f:K\to\{0,\pm 1\}$
if and only if $XX^{(-1)}=|K|-K$. 
It is straightforward to check that this follows from the equations that $D$ satisfies.
\end{proof}

Various known constructions  are available as input for Proposition \ref{prop:pm1}.
Numerous families of skew Hadamard difference sets exist, see
\cite{Pal33,be,DY06,DWX07,WQWX07,Muz10,FX12,DPW13,Mom13}.
Paley type partial difference sets exist in abelian groups
which are not elementary abelian and even in groups which are not $p$-groups,
see Polhill \cite{Pol10}. Note that the groups $K$ and $H$ in Theorem
\ref{thm:main}  need not be isomorphic, and that no field structure is
required.

Combining Theorem \ref{thm:main} with Example \ref{ex:char}, we obtain the
following explicit form over finite fields.

\begin{corollary}\label{cor:field}
Let $q=p^{n}$ be a prime power, let $\varphi_1,\varphi_2$ be nontrivial
characters of $\F_{q}^{\times}$, and let $h$ be divisible by
$\lcm(6,\ord(\varphi_1),\ord(\varphi_2))$. Let $\gamma\in\mu_h$ and put
$G=(\F_q\times\F_q,+)\cong(\Z_p)^{2n}$. Then
\[
D=1+\zeta_6\sum_{x\neq0}(x,0)+\zeta_6^{5}\sum_{y\neq0}(0,y)
+\gamma\sum_{x,y\neq0}\varphi_1(x)\varphi_2(y)\,(x,y)
\]
corresponds to a $\BH(G,h)$ matrix. Writing
$\chi_{(u,v)}(x,y)=\zeta_p^{\Tr(ux+vy)}$, we have
\begin{equation}\label{eq:fieldval}
\chi_{(u,v)}(D)=\gamma\,\overline{\varphi_1}(u)\,\overline{\varphi_2}(v)\,
G(\varphi_1)G(\varphi_2)
\qquad\text{for } u,v\neq0 .
\end{equation}
\end{corollary}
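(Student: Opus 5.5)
The plan is to obtain the corollary as a direct specialization of Theorem~\ref{thm:main} to $K=H=(\F_q,+)$, written multiplicatively so that the identity element $1$ of $K$ corresponds to $0\in\F_q$. By Example~\ref{ex:char}, $f_1=\varphi_1$ and $f_2=\varphi_2$ (extended by $\varphi_i(0)=0$) are Butson-Jacobsthal functions with values in $\mu_{\ord(\varphi_i)}\cup\{0\}$. Since $\ord(\varphi_i)\mid h$, these are contained in $\mu_h\cup\{0\}$, and $6\mid h$ by assumption. Substituting into \eqref{eq:main} gives exactly the displayed $D$: the condition $x\neq 1$ becomes $x\neq 0$, and the coefficient of $(x,y)$ becomes $\gamma\varphi_1(x)\varphi_2(y)$. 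Hence Theorem~\ref{thm:main} yields that $D$ corresponds to a $\BH(G,h)$ matrix. The identification $(\F_q\times\F_q,+)\cong(\Z_p)^{2n}$ is standard.

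For the character values, every character of $G$ has the form $\chi_{(u,v)}$ with $\alpha(x)=\zeta_p^{\Tr(ux)}$ and $\beta(y)=\zeta_p^{\Tr(vy)}$. These are nontrivial precisely when $u\neq0$, respectively $v\neq0$, because the trace form is nondegenerate. For $u,v\neq0$, equation \eqref{eq:val} in the proof of Theorem~\ref{thm:main} gives $\chi_{(u,v)}(D)=\gamma S(\alpha)T(\beta)$. It then suffices to evaluate
\[
S(\alpha)=\sum_{x\in\F_q}\varphi_1(x)\zeta_p^{\Tr(ux)}.
\]
Substituting $x=u^{-1}z$ gives $S(\alpha)=\varphi_1(u^{-1})G(\varphi_1)=\overline{\varphi_1}(u)G(\varphi_1)$, which is the identity already recorded in Example~\ref{ex:char}. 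The same computation gives $T(\beta)=\overline{\varphi_2}(v)G(\varphi_2)$, and multiplying the two yields \eqref{eq:fieldval}.

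No step is a real obstacle. The only points needing care are the translation between the multiplicative notation of Theorem~\ref{thm:main} and the additive notation here (identity $1\leftrightarrow 0$), and checking that the divisibility hypothesis on $h$ covers both $6$ and the orders of $\varphi_1,\varphi_2$.
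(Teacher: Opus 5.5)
Your proposal is correct and follows the paper's own proof: it applies Theorem~\ref{thm:main} with $f_i=\varphi_i$ (justified by Example~\ref{ex:char}) and gets \eqref{eq:fieldval} from \eqref{eq:val} together with $S(\alpha)=\overline{\varphi_1}(u)G(\varphi_1)$. Your extra checks are all sound and only make explicit what the paper leaves implicit: that $\ord(\varphi_i)\mid h$, the correspondence $1\leftrightarrow 0$, nontriviality of $\chi_{(u,v)}$ on each factor, and the substitution $x=u^{-1}z$.
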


\begin{proof}
Apply Theorem \ref{thm:main} with $f_1=\varphi_1$ and $f_2=\varphi_2$. Formula
\eqref{eq:fieldval} follows from \eqref{eq:val} and
$S(\chi_u)=\overline{\varphi_1}(u)G(\varphi_1)$, see Example \ref{ex:char}.
\end{proof}

%%%%%%%%%%%%%%%%%%%%%%%%%%%%%%%%%%%%%%%%%%%%%%%%%%%%%%%%%

\section{Comparison with previously known constructions}\label{sec:new}

To our knowledge, all previously constructed $\BH(G,h)$ matrices with $G$ an abelian group of square order,
with the sole exception of \cite{Sch26}, 
have the property that all character values of the corresponding group ring element are divisible by $\sqrt{|G|}$. 
In other words, 
if $D\in \Z[\zeta_h][G]$ corresponds to  a  previously constructed $\BH(G,h)$  matrix (different from those in \cite{Sch26})
with abelian $G$ of order $u^2$, then
\begin{equation} \label{naive}
\chi(D) \equiv 0\ (\bmod\ u) \text{ for all } \chi \in \widehat{G}.
\end{equation}
In particular, \eqref{naive} holds for all $\BH(G,h)$ matrices constructed in 
\cite{bac, Duc21, DS19, HS16, kum, ro1, ro2, SWX21}. 
In this section, we show that \eqref{naive} does \emph{not} hold for  many of the matrices constructed in Theorem \ref{thm:main}.
Thus, these matrices  are new. 

We call a prime $p$ \emph{semiprimitive} \emph{modulo $h$}
if $(p,h)=1$ and $p^i \equiv -1\ (\bmod\ h)$ for some integer $i$.
Recall that a Gauss sum $G(\varphi)$ over $\F_q$ is called \emph{pure} if some
positive power of $G(\varphi)$ is rational. Since $|G(\varphi)|^2=q$, this holds
if and only if $G(\varphi)/\sqrt q$ is a root of unity. The quadratic
character $\eta$ always has a pure Gauss sum, as $G(\eta)^2=\eta(-1)q$. 
More generally, $G(\varphi)$ is pure whenever $p$ is semiprimitive modulo $\ord(\varphi)$, see \cite{BMW82}. 
Conversely,  if $\ord(\varphi)$ is a prime power and $p$ is not semiprimitive modulo $\ord(\varphi)$,
then $G(\varphi)$ is not pure \cite{Cho62,Eva81}.

\begin{lemma}\label{lem:pure}
Let $q$, $\varphi_1$, $\varphi_2$, $h$, $\gamma$, and $D$ be as in Corollary
\ref{cor:field}, and suppose that $G(\varphi_1)$ is pure and $G(\varphi_2)$ is
not pure. Then
\[
\chi_{(u,v)}(D)\not\equiv0\pmod q\qquad\text{for all }u,v\in\F_q^{\times}.
\]
\end{lemma}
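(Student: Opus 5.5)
We argue by contradiction, starting from formula \eqref{eq:fieldval} of Corollary \ref{cor:field}. For $u,v\neq0$ it gives
\[
\chi_{(u,v)}(D)=\gamma\,\overline{\varphi_1}(u)\,\overline{\varphi_2}(v)\,G(\varphi_1)G(\varphi_2).
\]
The prefactor $\gamma\overline{\varphi_1}(u)\overline{\varphi_2}(v)$ is a root of unity, hence a unit in the ring of algebraic integers $\mathcal O$ of a suitable cyclotomic field containing all the quantities involved (say $\mathbb{Q}(\zeta_{4h},\zeta_p)$, which contains $\sqrt q$ up to a unit). So the congruence $\chi_{(u,v)}(D)\equiv 0\pmod q$ is equivalent to $G(\varphi_1)G(\varphi_2)\equiv0\pmod q$ in $\mathcal O$. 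Suppose this holds.

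Next we use purity of $G(\varphi_1)$. By the discussion before the lemma, $G(\varphi_1)=\epsilon\sqrt q$ for some root of unity $\epsilon$, where $\sqrt q$ denotes the positive square root. Then
\[
G(\varphi_1)G(\varphi_2)=\epsilon\sqrt q\,G(\varphi_2),
\]
and the assumed congruence becomes $\sqrt q\,G(\varphi_2)\in q\mathcal O$, that is, $G(\varphi_2)/\sqrt q\in\mathcal O$. The case $q$ a non-square needs no separate treatment, because we work in a field containing $\sqrt q$; the only property we use is that $q/\sqrt q=\sqrt q$ is an algebraic integer.

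It remains to show that $\beta:=G(\varphi_2)/\sqrt q$ is a root of unity, which contradicts the hypothesis that $G(\varphi_2)$ is not pure. We know $\beta$ is an algebraic integer in an abelian (cyclotomic) field. Every Galois conjugate of $G(\varphi_2)$ has absolute value $\sqrt q$: for $\sigma$ in the Galois group, $\sigma$ sends $\zeta_p\mapsto\zeta_p^a$ and acts on the $\varphi_2$-values, so $\sigma(G(\varphi_2))$ equals $\varphi_2^{c}(a^{-1})G(\varphi_2^{c})$ for some $c$ coprime to $\ord(\varphi_2)$, and $\varphi_2^c$ is nontrivial, so \cite[Thm.~5.11]{LN97} applies. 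Likewise $\sigma(\sqrt q)=\pm\sqrt q$. Hence every conjugate of $\beta$ has absolute value $1$. Kronecker's theorem then says that an algebraic integer all of whose conjugates have absolute value $1$ is a root of unity. So $\beta$ is a root of unity, i.e.\ $G(\varphi_2)$ is pure, the desired contradiction. The same argument applies to every pair $u,v\in\F_q^{\times}$.

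The main point needing care is this last step: checking that all Galois conjugates of $G(\varphi_2)$ have modulus $\sqrt q$, and correctly interpreting ``$\bmod\ q$'' as divisibility in the ring of integers of a field containing $\sqrt q$. Everything else is direct substitution into \eqref{eq:fieldval}.
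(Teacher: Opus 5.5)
Your proof is correct and follows essentially the paper's argument. The paper applies Kronecker's theorem to $Y=\chi_{(u,v)}(D)/q\in\Z[\zeta_{ph}]$ and then divides by the root of unity $G(\varphi_1)/\sqrt q$, whereas you divide by $G(\varphi_1)$ first and apply Kronecker to $G(\varphi_2)/\sqrt q$, which differs from $Y$ only by a root of unity; your explicit check that all Galois conjugates have modulus $1$ spells out a hypothesis of Kronecker's theorem that the paper leaves implicit (there it follows from $|Y|=1$ because the field is abelian).
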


\begin{proof}
Let $u,v\ne0$ and suppose that $\chi_{(u,v)}(D)\equiv 0\pmod q$.
Write $\chi_{(u,v)}(D) = q Y$ with $Y\in \Z[\zeta_{ph}]$. 
Note that $|Y|=1$ since $|\chi_{(u,v)}(D)|^2=q^2$. 
Thus, $Y$ is a root of unity by Kronecker's theorem \cite{kronecker}. 
On the other hand, 
\[
\chi_{(u,v)}(D) =\gamma\,\overline{\varphi_1}(u)\,\overline{\varphi_2}(v)\,G(\varphi_1)G(\varphi_2)
\]
by \eqref{eq:fieldval} and $G(\varphi_1)/\sqrt{q}$ is a root of unity since $G(\varphi_1)$ is pure. 
Hence
$$
\frac{G(\varphi_2)}{\sqrt{q}} 
=\overline{\gamma}\,\varphi_1(u)\,\varphi_2(v)  \frac{\chi_{(u,v)}(D)}{G(\varphi_1)\sqrt{q}}
= \overline{\gamma}\,\varphi_1(u)\,\varphi_2(v)Y  \frac{\sqrt{q}}{G(\varphi_1)}
$$
is a root of unity, contradicting the non-purity of $G(\varphi_2)$.
\end{proof}

\begin{corollary}\label{cor:pure}
Let $p$ be a prime and let $m\ge3$ be a divisor of $p-1$. Then there is a
$\BH(\Z_p\times\Z_p,\lcm(6,m))$ matrix with corresponding group ring element $D$ 
such that $\chi(D)\not\equiv0\pmod p$
for all $\chi\in\Ghat$ which are nontrivial on both factors.
\end{corollary}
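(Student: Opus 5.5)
We plan to apply Corollary \ref{cor:field} with $q=p$, so that $G=(\F_p\times\F_p,+)\cong\Z_p\times\Z_p$, and then invoke Lemma \ref{lem:pure}. The pure Gauss sum is supplied by the quadratic character, $\varphi_1=\eta$, since $G(\eta)^2=\eta(-1)p$ makes $G(\eta)$ pure. For the non-pure Gauss sum we take $\varphi_2$ of order $r^a$, where $r^a$ is a suitable prime power dividing $m$. Every nontrivial character of $\F_p^\times$ has order dividing $p-1$, so such characters exist. We put $h=\lcm(6,m)$. This $h$ is divisible by $\lcm(6,\ord\varphi_1,\ord\varphi_2)$ as soon as $2\mid h$ (true since $6\mid h$) and $r^a\mid m$. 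Corollary \ref{cor:field} then gives a $\BH(G,h)$ matrix with element $D$. A character $\chi_{(u,v)}$ is nontrivial on both factors exactly when $u,v\neq0$, so Lemma \ref{lem:pure} yields $\chi(D)\not\equiv0\pmod p$ for all such $\chi$.

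The key step is choosing $r^a\mid m$ so that $G(\varphi_2)$ is not pure. By the cited converse \cite{Cho62,Eva81}, it suffices that $p$ is not semiprimitive modulo $r^a$. Since $r^a\mid p-1$, we have $p\equiv1\pmod{r^a}$, so $p^i\equiv1$ for all $i$. Hence $p^i\equiv-1\pmod{r^a}$ would force $r^a\mid2$. So any prime power $r^a\mid m$ with $r^a\ge3$ works.

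The main obstacle is the existence of such a divisor, and we handle it by cases. If $m$ has an odd prime divisor $r$, take $r^a=r$. Otherwise $m$ is a power of $2$ with $m\ge3$, so $4\mid m$ and we take $r^a=4$. In either case $\varphi_2$ has prime-power order at least $3$, and the semiprimitivity argument above gives non-purity.

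A subtle point to check is that Lemma \ref{lem:pure} is stated modulo $q$ and uses $\sqrt q$ together with Kronecker's theorem. With $q=p$ this is precisely the required congruence modulo $p$, so nothing further is needed.
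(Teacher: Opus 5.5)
Your argument is correct and has the same skeleton as the paper's proof: Corollary \ref{cor:field} with $q=p$ and $\varphi_1=\eta$, followed by Lemma \ref{lem:pure}. It differs in the choice of $\varphi_2$ and in how non-purity is justified.

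The paper takes $\varphi_2$ of order exactly $m$ and gets non-purity of $G(\varphi_2)$ in one line from Chowla's theorem for prime fields, which covers characters of any order greater than $2$. You instead pass to a prime power $r^a\ge 3$ dividing $m$. You then use only the prime-power converse recorded in Section \ref{sec:new}, checking non-semiprimitivity directly from $p\equiv 1\pmod{r^a}$. This keeps your proof self-contained relative to the results the paper actually states. For $m$ not a prime power, say $m=6$, that converse says nothing about a character of order $m$, so the paper's route depends on the stronger prime-field result.

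The cost is that your construction no longer involves a Gauss sum of order exactly $m$. Your entries lie in $\mu_{\lcm(6,r^a)}$, possibly a proper subgroup of $\mu_{\lcm(6,m)}$. This is harmless, since a matrix with entries in a subgroup of $\mu_h$ is still a $\BH(\cdot,h)$ matrix, and Corollary \ref{cor:field} explicitly allows any $h$ divisible by $\lcm(6,\ord\varphi_1,\ord\varphi_2)$.

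One small slip: a character of order $r^a$ exists because the character group of $\F_p^{\times}$ is cyclic of order $p-1$. The fact that every character order divides $p-1$ does not by itself give this.
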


\begin{proof}
Apply Lemma \ref{lem:pure} with the quadratic character for $\varphi_1$ and $\varphi_2$ of order
$m$. Note that $G(\varphi_2)$ is not pure by \cite{Cho62}. 
\end{proof}

It is interesting to compare Corollary \ref{cor:pure} with the following result
from \cite{DS26,SWX21}:

\begin{result} \label{spread}
Let $p$ be an odd prime, let $h$ be a positive integer,
let $G = \Z_p\times \Z_p$, let $U_0, \dots, U_p$ be the subgroups of $G$ of order $p$,
and let $\eta_0, \dots, \eta_p\in \mu_h$ with $\sum_{i=0}^{p} \eta_i = 1$. Then
\begin{equation} \label{eq:spread}
	D = \sum_{i=0}^{p} \eta_i U_i
\end{equation}
corresponds to a $\BH(G, h)$ matrix.
Moreover, if $p$ is semiprimitive modulo $h$, then every $\BH(G, h)$ matrix is equivalent to one
of the form (\ref{eq:spread}). 
\end{result}

A natural question that arises from Result \ref{spread} is
whether all $\BH(\Z_p\times \Z_p, h)$ matrices with $(p,h)=1$ must be of the form \eqref{eq:spread},
even when $p$ is not semiprimitive modulo $h$.
Note that \eqref{eq:spread} implies that \eqref{naive} holds for all matrices that 
are equivalent to a matrix constructed in Result \ref{spread}.
Thus, Corollary \ref{cor:pure} provides many examples of $\BH(\Z_p\times \Z_p, h)$ matrices
that are not equivalent to any matrix of the form \eqref{eq:spread}.

%%%%%%%%%%%%%%%%%%%%%%%%%%%%%%%%%%%%%%%%%%%%%%%%%%%%%
\section*{Acknowledgement}

Part of this work was carried out with the assistance of Anthropic's Claude Fable 5
and DeepSeek V4. 
The systems were used to run exploratory computations on $\BH(\Z_5\times \Z_5,12)$ 
matrices and to search for generalizations. 
The constructions and proofs presented here were written 
by the authors, who take full responsibility for the content.

%%%%%%%%%%%%%%%%%%%%%%%%%%%%%%%%%%%%%%%%%%%%%%%%%%%%%

\end{document}